\theoremstyle{plain}
\newtheorem{thm}{Theorem}[section]
\newtheorem{prop}[thm]{Proposition}
\theoremstyle{definition}
\newtheorem{rem}[thm]{Remark}
\newcommand{\neutralize}[1]{\expandafter\let\csname c@#1\endcsname\count@}
\DeclareMathOperator{\C}{\mathbb{C}}
\DeclareMathOperator{\Spec}{{\rm  Spec}}
    \DeclareFontFamily{U}{wncy}{}
    \DeclareFontShape{U}{wncy}{m}{n}{<->wncyr10}{}
    \DeclareSymbolFont{mcy}{U}{wncy}{m}{n}
    \DeclareMathSymbol{\Sha}{\mathord}{mcy}{"58} 
\numberwithin{equation}{section}
\renewcommand{\i}{\mathrm{i}}
\DeclareSymbolFont{bbold}{U}{bbold}{m}{n}
\DeclareSymbolFontAlphabet{\mathbbold}{bbold}
\newcommand{\ind}{\bm{1}}
\newcommand{\revision}[1]{{\color{black}#1}}
\begin{document}

\title{Broadcasting solutions on \revision{networked systems} of phase oscillators}

\author{Tung T. Nguyen}
\thanks{These authors contributed equally}
\affiliation{Department of Mathematics, Western University, London, ON, Canada}
\affiliation{Western Academy for Advanced Research, Western University, London, ON, Canada}
\affiliation{Western Institute for Neuroscience, Western University, London, ON, Canada}
\author{Roberto C. Budzinski}
\thanks{These authors contributed equally}
\affiliation{Department of Mathematics, Western University, London, ON, Canada}
\affiliation{Western Academy for Advanced Research, Western University, London, ON, Canada}
\affiliation{Western Institute for Neuroscience, Western University, London, ON, Canada}
\author{Federico W. Pasini}
\affiliation{Department of Mathematics, Western University, London, ON, Canada}
\author{Robin Delabays}
\affiliation{University of Applied Sciences and Arts of Western Switzerland HES-SO, Sion, Switzerland}
\affiliation{Center for Control, Dynamical Systems, and Computation, University of California, Santa Barbara, CA, USA}
\author{J\'an Min\'a{\v c}}
\affiliation{Department of Mathematics, Western University, London, ON, Canada}
\affiliation{Western Academy for Advanced Research, Western University, London, ON, Canada}
\author{Lyle E. Muller}
\affiliation{Department of Mathematics, Western University, London, ON, Canada}
\affiliation{Western Academy for Advanced Research, Western University, London, ON, Canada}
\affiliation{Western Institute for Neuroscience, Western University, London, ON, Canada}

\begin{abstract}
\revision{Networked systems} have been used to model and investigate the dynamical behavior of a variety of systems. \revision{For these systems, different levels of complexity can be considered in the modeling procedure.} On one hand, \revision{this can offer} a more realistic and rich modeling option. On the other hand, it can lead to intrinsic difficulty in analyzing the system. Here, we present an approach to investigate the dynamics of Kuramoto oscillators on \revision{networks with different levels of connections: a network of networks.} \revision{To do so, we utilize a construction in network theory known as} the join of \revision {networks}, which represents \revision{``intra-area" and ``inter-area"} connections. This approach provides a reduced representation of the original, \revision{multi-level system}, where both systems have equivalent dynamics. Then, we can find solutions for the reduced system and broadcast them to the \revision{original network of networks}. Moreover, using the same idea we can investigate the stability of these states, where we can obtain information on the Jacobian of the \revision{multi-level system} by analyzing the reduced one. This approach is general for arbitrary connection schemes between nodes within the same \revision{area}. Finally, our work opens the possibility of studying the dynamics of \revision{networked systems} using a simpler representation, thus leading to a better understanding of the dynamical behavior of these systems.
\end{abstract}

\maketitle

\section{Introduction}\label{sec:introduction}

Networked systems have been used to model and understand collective behaviors of several systems \cite{watts1998collective,strogatz2001exploring, boccara2010modeling}. Examples can be found spanning from neuroscience \cite{bassett2018nature} to ecology \cite{banerjee2016chimera} to physical systems, like power-grids \cite{kinney2005modeling, motter2013spontaneous, dorfler2013synchronization}. \revision{These systems} can be understood as single, undirected small networks, or networks of networks, or even networks with higher-order interactions \cite{battiston2020networks,battiston2021physics}. In this context, one important approach to improve the modeling of real systems is given \revision{by networks with different levels of connections \cite{boccaletti2014structure,kivela2014multilayer, kenett2015networks,gao2012networks}}. This approach allows the study of important systems such as \revision{spreading} processes \cite{de2016physics}, animal behavior \cite{silk2018can}, neural dynamics \cite{braun2015dynamic,bassett2018nature}, and synchronization phenomena \cite{zhang2015explosive,della2020symmetries}. On the other hand, the addition of an extra level of connections makes the analysis process more complicated, especially for analytical and mechanistic insights. In this paper, we \revision{present} an approach that offers a simplified way to analyze the dynamics of \revision{these networks}, which allows us to obtain equilibrium points and analyze the stability for a large class of such systems.

\revision{Here, we focus particularly on systems that can be understood as a network of networks \cite{kenett2015networks,gao2012networks,gao2011robustness}. In this case, the system can have two levels of connections: on the first level, there are connections within each sub-network; on the higher level, there are connections between the sub-networks. This is similar to multilayer networks \cite{boccaletti2014structure,kivela2014multilayer}. In our case, though, we consider the same nature of coupling in all levels, i.e. the function describing the interaction is the same. We then consider different scale factors and connection architectures in each level. Due to these reasons, we use the two terms a network of networks and a multi-level network interchangeably throughout this article.}

Based on the framework developed in \cite{doan2022joins,doan2022sprectrum,chebolu2022joins}, we can represent \revision{a network of networks, a multi-level system,} by a join of matrices containing information about the connections \revision{in different levels}. We then introduce a reduced representation of the system, which has a clear connection with the join matrix that represents the \revision{multi-level} network. Thus, this reduced system allows us to study the dynamics of the \revision{sophisticated} system in a simpler way. Then, we can find solutions for the reduced system and broadcast them to the \revision{original} networks. \revision{In other words, we can use our reduced representation of the dynamical system to find solutions and then spread this dynamics to the multi-level network.} \revision{This kind of approach has been used in the literature for the study of the dynamical behavior of these networks under different perspectives. For instance, Schaub \textit{et al.} have introduced a comprehensive framework to study multilayer networks using a similar idea \cite{schaub2016graph}. Further, cluster synchronization and its stability have been studied on these networks considering phase oscillators \cite{menara2019stability, tiberi2017synchronization}.}

\revision{Here, among various applications, our approach offers a direct way to find and analyze equilibrium points in a network of networks with an arbitrary number of sub-networks and arbitrary connection architecture (adjacency matrices) for nodes within the same sub-network. To study this problem,} we consider the individual dynamics of each node given by the Kuramoto model \cite{kuramoto1975self,acebron2005kuramoto,rodrigues2016kuramoto}, so each node is represented by a phase oscillator. The Kuramoto model is a central model for the studying of synchronization and the collective behavior of several systems. It has been used in problems spanning from social interactions \cite{pluchino2006compromise} to biology \cite{breakspear2010generative,bick2020understanding} and physics \cite{arenas2008synchronization, boccaletti2002synchronization}. Particularly, a variety of synchronization phenomena and spatiotemporal patterns has been observed in Kuramoto systems: phase synchronization \cite{rodrigues2016kuramoto,strogatz2000kuramoto}, remote synchronization \cite{nicosia2013remote}, twisted states \cite{townsend2020dense,delabays2016multistability}, chimera states \cite{abrams2004chimera, parastesh2021chimeras}, and Bellerophon states \cite{xu2018origin}.

Furthermore, by considering the properties of joins of matrices and the idea of representing the \revision{multi-level} network by the reduced system, we can investigate into the stability of equilibrium points for Kuramoto oscillators on \revision{these} networks. When the solutions for the \revision{multi-level} systems are obtained through the broadcasting process from the reduced system, we have found a relation between the Jacobian of both systems. In this sense, we can evaluate the spectrum of a simpler matrix - representing the reduced systems - and obtain information on the stability of the equilibrium points for the \revision{more sophisticated, multi-level networks}.

In this paper, we first introduce Kuramoto oscillators on \revision{ a network of networks} and expose our approach with the reduced system and also show how we broadcast solutions to the \revision{multi-level} system - Sec. \ref{sec:main_idea_kuramoto_multilayer}; we extend the approach to the linear stability analysis of \revision{multi-level} networks - in Sec. \ref{sec:stability_analysis}; in addition to this, we show numerical simulations and examples of Kuramoto \revision{network of networks} and their reduced version, which corroborate the main approach introduced in this paper - Sec. \ref{sec:simulations}; finally, we discuss the results presented here and their implications, and draw our conclusions - Sec. \ref{sec:conclusions}.

\revision{\section{Kuramoto oscillators on a network of networks}}\label{sec:main_idea_kuramoto_multilayer}

The Kuramoto model \cite{kuramoto1975self,acebron2005kuramoto,rodrigues2016kuramoto} can be defined as the dynamical system governed by the equation
\begin{equation}
\frac{d\theta_i}{dt} = \omega_{i} + \sum_{j=1}^{\mathcal{N}} A_{ij} \sin( \theta_j - \theta_i ),
\label{eq:KM_main}
\end{equation}
where $\theta_{i} \in [-\pi, \, \pi] $ is the phase of the $i^{\mathrm{th}}$ oscillator, $\omega_{i}$ is its natural frequency, $\mathcal{N}$ is the number of oscillators in the system, and $A_{ij}$ are the elements of the weighted adjacency matrix that represents the connections of this system. That is, $A_{ij} = 0$ if nodes $i$ and $j$ are not connected, and $A_{ij} > 0$ if nodes $i$ and $j$ are connected. Here, we consider the case where all oscillators have the same natural frequency $\omega_{i} = \omega$ for all $i \in [1,\, \mathcal{N}]$.

\vspace{0.5cm}

\subsection{Kuramoto model}

For \revision{a network of networks}, we can consider two levels of interactions: \revision{intra-area (within each sub-network) and inter-are (between sub-networks)} connections. In this case, the Kuramoto model for the $i^{\mathrm{th}}$ oscillator in the $l^{\mathrm{th}}$ sub-network can be written as:
\begin{equation}
    \frac{d(\bm{\theta}_{l})_{i}}{dt} = \omega + \underbrace{ \sum_{j=1}^{N} (\bm{A}_{ll})_{ij} \sin( (\bm{\theta}_{l})_{j} - (\bm{\theta}_{l})_{i} )}_{\mathrm{intra-area}} + \underbrace{\sum_{k=1, k\neq l}^{M}\sum_{j=1}^{N} (\bm{A}_{lk})_{ij} \sin( (\bm{\theta}_{k})_{j} - (\bm{\theta}_{l})_{i} )}_{\mathrm{inter-area}},
\end{equation}
where $i,j \in [1, \, N]$ and $l,k \in [1, \, M]$, $M$ being the number of \revision{sub-networks} and $N$ the number of nodes in each one. Here, the coupling strength between two oscillators can assume different values based on the \revision{sub-network} these oscillators \revision{belong to}. Moreover, it is important to emphasize that the \revision{intra-area coupling considers the coupling between oscillators within a given sub-network, where we consider that each one has $N$ oscillator; the inter-area coupling considers the coupling between oscillators in different sub-networks.}

In this paper, on the other hand, we consider a different representation of a \revision{network of networks system}. Here, we use properties of joins of matrices to describe Kuramoto oscillators on \revision{multi-level} networks as:
\begin{equation}
\frac{d\theta_i}{dt} = \omega + \sum_{j=1}^{NM} A_{ij} \sin( \theta_j - \theta_i ),
\label{eq:KM_multilayer}
\end{equation}
where now $i \in [1,\, NM]$, such that:
\begin{equation}
    \bm{\theta} = (\underbrace{\theta_{1}, \theta_{2}, \cdots, \theta_{N}}_{1^{\mathrm{st}} \mathrm{area}}, \underbrace{\theta_{N+1}, \theta_{N+2}, \cdots, \theta_{2N}}_{2^{\mathrm{nd}} \mathrm{area}}, \cdots, \underbrace{\theta_{N(M-1) + 1}, \theta_{N(M-1) +2}, \cdots, \theta_{NM}}_{M^{\mathrm{th}} \mathrm{area}}).
    \label{eq:theta_multilayer}
\end{equation}
The adjacency matrix describing the \revision{multi-level} system is the $NM \times NM$ matrix:
\begin{equation}
\bm{A}=\left(\begin{array}{c|c|c|c}
\bm{A}_{11} & \bm{A}_{12} & \cdots & \bm{A}_{1M} \\
\hline
\bm{A}_{21} & \bm{A}_{22} & \cdots & \bm{A}_{2M} \\
\hline
\vdots & \vdots & \ddots & \vdots \\
\hline
\bm{A}_{M1} & \bm{A}_{M2} & \cdots & \bm{A}_{MM}
\end{array}\right),
\label{eq:matrix_multilayer}
\end{equation}
where $\bm{A}_{lk}$ is a $N \times N$ matrix, which describes the connections between the oscillators in the $l^{\mathrm{th}}$ and $k^{\mathrm{th}}$ \revision{areas (sub-networks)}. In case $l = k$, the block
represents the \revision{intra-area connections in the $l^{\mathrm{th}}$ area}. In case $l \neq k$, the block  represents the \revision{inter-area connections between oscillators in areas $l$ and $k$}. Note that in our treatment these matrices absorb the information about the coupling strength. \revision{Here, our approach is able to handle multi-level networks where the coupling strength between two different areas (sub-networks) can be different given two different areas. This means that the coupling strength between the areas $l$ and $k$ can be different than the coupling strength between areas $l$ and $k+1$.} 

For details on the joins of matrices and on the mathematical background of our approach, see \cite{doan2022joins,doan2022sprectrum,chebolu2022joins}.

\subsection{Reduced Kuramoto model and \revision{broadcasted} solutions}

In this paper we focus on networks satisfying a regularity condition: namely that the off-diagonal blocks $\bm{A}_{lk}$ for $l \neq k$ have the same row sum (that is, they are row-regular, or semimagic, matrices, \cite{doan2022sprectrum}). In terms of the Kuramoto model, this assumption means that, for any pair of \revision{areas}, each oscillator in the first \revision{sub-network} is connected with the same number of oscillators in the other \revision{sub-network}. To investigate the dynamics of such a system, we introduce an auxiliary \textit{reduced} $M \times M$ network, in which each \revision{sub-network} of the original system is condensed into one global oscillator. More concretely, the reduced system is described by the adjacency matrix
\begin{equation}
    \bar{\bm{A}}= 
\begin{pmatrix}
r_{\bm{A}_{11}} & r_{\bm{A}_{12}} & \cdots & r_{\bm{A}_{1M}} \\
r_{\bm{A}_{21}} & r_{\bm{A}_{22}} & \cdots & r_{\bm{A}_{2M}} \\
\vdots  & \vdots  & \ddots & \vdots  \\
r_{\bm{A}_{M1}} & r_{\bm{A}_{M2}} & \cdots & r_{\bm{A}_{MM}} 
\end{pmatrix},
\label{eq:matrix_reduced_system}
\end{equation}
where $r_{\bm{A}_{lk}}$ is the row sum of $\bm{A}_{lk}$. At this point, it is important to emphasize that these matrices have information about the coupling strength for the \revision{intra-area and inter-area cases}, thus the line sum captures this information.

We can now define the reduced Kuramoto model:
\begin{equation}
\frac{d\bar{\theta}_i}{dt} = \omega + \sum_{j=1}^{M} \bar{A}_{ij} \sin( \bar{\theta}_j - \bar{\theta}_i ),
\label{eq:KM_reduced_system}
\end{equation}
where now $i \in [1,\,M]$, \revision{and $\bar{\theta}_i\in[-\pi,\pi]$ is the phase of an oscillator representing the $i$th sub-network}. We emphasize that the main diagonal of $\bar{\bm{A}}$ does not affect the dynamics \revision{of the system}, since $\sin(\bar{\theta}_{i} - \bar{\theta}_{i}) = 0$ \revision{thus not having an effect on the dynamics of the Kuramoto networks}. Therefore, in the following, we often make the harmless identification
\begin{equation}
    \bar{\bm{A}}= 
\begin{pmatrix}
r_{\bm{A}_{11}} & r_{\bm{A}_{12}} & \cdots & r_{\bm{A}_{1M}} \\
r_{\bm{A}_{21}} & r_{\bm{A}_{22}} & \cdots & r_{\bm{A}_{2M}} \\
\vdots  & \vdots  & \ddots & \vdots  \\
r_{\bm{A}_{M1}} & r_{\bm{A}_{M2}} & \cdots & r_{\bm{A}_{MM}} 
\end{pmatrix} \simeq \begin{pmatrix}
0 & r_{\bm{A}_{12}} & \cdots & r_{\bm{A}_{1M}} \\
r_{\bm{A}_{21}} & 0 & \cdots & r_{\bm{A}_{2M}} \\
\vdots  & \vdots  & \ddots & \vdots  \\
r_{\bm{A}_{M1}} & r_{\bm{A}_{M2}} & \cdots & 0
\end{pmatrix}.
\end{equation}

The Kuramoto network on the \revision{multi-level} system - Eq. (\ref{eq:KM_multilayer}) - represented by $\bm{A}$ - Eq. (\ref{eq:matrix_multilayer}) - and the Kuramoto network on the reduced system - Eq. (\ref{eq:KM_reduced_system}) - represented by $\bar{\bm{A}}$ - Eq. (\ref{eq:matrix_reduced_system}) - have the \revision{an equivalent} dynamics. Thus, we can use the reduced system to better understand the dynamics on the more \revision{sophisticated, multi-level} network.

More precisely, solutions for the reduced system can be \textit{broadcasted} to the original network: given a solution of the reduced Kuramoto model:
\begin{equation}
    \bar{\bm{\theta}}^{\ast} = (\bar{\theta}_{1}^{\ast}, \bar{\theta}_{2}^{\ast}, \cdots, \bar{\theta}_{M}^{\ast}),
    \label{eq:solution_reduced_system}
\end{equation}
then
\begin{equation}
\bm{\theta}^{\ast} = (\underbrace{\bar{\theta}_{1}^{\ast}, \bar{\theta}_{1}^{\ast}, \cdots, \bar{\theta}_{1}^{\ast}}_{1^{\mathrm{st}} \mathrm{area}}, \underbrace{\bar{\theta}_{2}^{\ast}, \bar{\theta}_{2}^{\ast}, \cdots, \bar{\theta}_{2}^{\ast}}_{2^{\mathrm{nd}} \mathrm{area}}, \cdots, \underbrace{\bar{\theta}_{M}^{\ast}, \bar{\theta}_{M}^{\ast}, \cdots, \bar{\theta}_{M}^{\ast}}_{M^{\mathrm{th}} \mathrm{area}}),
\label{eq:solution_multilayer_spread}
\end{equation}
is the corresponding \textit{broadcasted} solution of the \revision{multi-level} system described by $\bm{A}$.

Moreover, we note the dependence of the dynamics on the coupling strength. The \revision{inter-area} coupling controlled by $\epsilon$ is also presented in the reduced system, see Eq. (\ref{eq:matrix_reduced_system}). So, if we consider \revision{an equivalent} initial state, both systems evolve to the \revision{equivalent} final state. However, the stronger the coupling, the faster the transition, which we summarize in the following proposition whose proof follows from direct calculations (valid for nonzero coupling only):
\begin{prop}\label{prop:coupling_scale}
Suppose $\bar{\bm{\theta}}^{\ast}=(\bar{\theta}_{1}^{\ast}, \bar{\theta}_{2}^{\ast}, \cdots, \bar{\theta}_{M}^{\ast})$ is a solution of the Kuramoto model
\[ \dot{\theta}_i = \epsilon \sum_{j} A_{ij} \sin(\theta_j-\theta_i) ,\]
with intial condition $\bm{\theta}(0)=\bm{\theta}_0.$
Let $\bm{\psi}(t)=\bar{\bm{\theta}}^{\ast}\big(\frac{\epsilon'}{\epsilon} t\big)$. Then $\bm{\psi}(t)$ is a solution of the Kuramoto model
\[ \dot{\psi}_i = \epsilon' \sum_{j} A_{ij} \sin(\psi_j-\psi_i) ,\]
with the same initial condition $\bm{\psi}(0)=\bm{\theta}_0.$
\end{prop}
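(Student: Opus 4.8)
The plan is to treat this as a pure time-rescaling statement and verify it by a one-line application of the chain rule. First I would dispose of the initial condition: by definition $\bm{\psi}(t)=\bar{\bm{\theta}}^{\ast}\big(\tfrac{\epsilon'}{\epsilon}t\big)$, so evaluating at $t=0$ gives $\bm{\psi}(0)=\bar{\bm{\theta}}^{\ast}(0)=\bm{\theta}_0$, which is precisely the claimed initial datum. (This is the point at which the hypothesis $\epsilon\neq 0$ enters, so that the rescaling factor $\epsilon'/\epsilon$ is defined at all.)

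Next I would differentiate componentwise. Setting $s=\tfrac{\epsilon'}{\epsilon}t$ and using the chain rule, $\dot{\psi}_i(t)=\tfrac{\epsilon'}{\epsilon}\,\dot{\bar\theta}^{\ast}_i(s)$. Since $\bar{\bm{\theta}}^{\ast}$ solves the $\epsilon$-Kuramoto system, $\dot{\bar\theta}^{\ast}_i(s)=\epsilon\sum_{j}A_{ij}\sin\big(\bar\theta^{\ast}_j(s)-\bar\theta^{\ast}_i(s)\big)$, and substituting this in yields
\[
\dot{\psi}_i(t)=\frac{\epsilon'}{\epsilon}\,\epsilon\sum_{j}A_{ij}\sin\big(\bar\theta^{\ast}_j(s)-\bar\theta^{\ast}_i(s)\big)=\epsilon'\sum_{j}A_{ij}\sin\big(\psi_j(t)-\psi_i(t)\big),
\]
where the last equality uses $\bar\theta^{\ast}_k(s)=\bar\theta^{\ast}_k\big(\tfrac{\epsilon'}{\epsilon}t\big)=\psi_k(t)$. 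This is exactly the $\epsilon'$-Kuramoto equation, so $\bm{\psi}$ is a solution of it with initial condition $\bm{\theta}_0$, as claimed.

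I do not expect any real obstacle: the computation is the elementary observation that scaling the vector field by a constant is equivalent to scaling time by the reciprocal constant, and the only hypothesis actually used is the (explicitly assumed) nonvanishing of $\epsilon$. If one wished to be pedantic, one could add that smoothness of the right-hand side gives uniqueness of solutions, so $\bm{\psi}$ is in fact \emph{the} solution with that initial datum; but the statement as phrased does not require this. The identical argument applies verbatim to the reduced Kuramoto model of Eq.~(\ref{eq:KM_reduced_system}), which is the reason this remark is placed here.
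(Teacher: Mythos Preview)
Your proof is correct and is precisely the ``direct calculation'' the paper alludes to (the paper does not spell out any details beyond that phrase). The chain-rule verification you give, together with the observation that $\epsilon\neq 0$ is needed for the rescaling to make sense, is exactly what is intended.
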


\begin{rem}\label{rem:different_nodes}
We note that the idea presented in this section about the correspondence between the \revision{network of networks} and the reduced system is valid even when the \revision{sub-networks} have a different number of nodes. In this case, $\bm{A}$ would be a $\mathcal{M} \times \mathcal{M}$ matrix, where $\mathcal{M} = N_{1} + N_{2} + \cdots + N_{M}$. We presented here the version where the \revision{sub-network} have the same number of nodes $N$ for simplicity purpose. 
\end{rem}

\subsection{Kuramoto order parameter}\label{sec:order_parameter}

In order to characterize the dynamical behavior of the system, we use the Kuramoto order parameter \cite{rodrigues2016kuramoto,acebron2005kuramoto}. The order parameter for the \revision{multi-level} system is defined as
\begin{equation}
    R(t) = \frac{1}{NM} \left|\sum\limits_{j=1}^{NM} \exp{(\i\theta_j(t))}\right|,
    \label{eq:order_parameter_multilayer}
\end{equation}
where $\theta(t)$ is given by Eq. (\ref{eq:KM_multilayer}). Here, $R(t) = 1$ means that all oscillators in all \revision{sub-networks} have the same phase at a given time $t$, which is defined as phase synchronization. If the oscillators are not phase synchronized, the order parameter is less than $1$. For the asynchronous behavior, $R$ assumes residual values. When the system is on a phase-locking solution, also called a twisted state, $R(t) = 0$.

We also measure the level of synchronization of the Kuramoto network on the reduced representation using the Kuramoto order parameter. In this case, the order parameter is written as
\begin{equation}
    \bar{R}(t) = \frac{1}{M} \left|\sum\limits_{j=1}^{M} \exp{(\i \bar{\theta}_j(t))}\right|,
    \label{eq:order_parameter_reduced}
\end{equation}
where $\bar{R}(t) = 1$ means the reduced system is phase synchronized at time $t$, an asynchronous behavior leads to a residual value of $\bar{R}$, and if the reduced system has a phase-locking solution (``twisted state"), then $\bar{R} = 0$. 

By the definition of the broadcasted solution and direct calculations, we have the following proposition.
\begin{prop}\label{prop:order_parameter}
Suppose that $\bar{\bm{\theta}}^{\ast}$ is a solution of the Kuramoto model on $\bar{\bm{A}}$ and $\bm{\theta}^{\ast}$ is the corresponding broadcasted solution of the Kuramoto model on $\bm{A}.$ Then the two systems have the same Kuramoto order parameter for all times: $R(t) = \bar{R}(t), \, t \geq 0.$
\end{prop}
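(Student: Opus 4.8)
The plan is to exploit the very simple structure of the broadcasting solution. By Eq.~(\ref{eq:solution_multilayer_spread}), at each time $t$ the $NM$ components of $\bm{\theta}^{\ast}(t)$ take only $M$ distinct values, namely $\bar{\theta}_1^{\ast}(t),\dots,\bar{\theta}_M^{\ast}(t)$, and each value is attained by exactly $N$ oscillators (the $N$ nodes of the corresponding layer). Substituting this into the definition~(\ref{eq:order_parameter_multilayer}) of the multilayer order parameter collapses the inner sum over each layer to a factor of $N$, after which the normalization $1/(NM)$ turns into $1/M$ and reproduces the reduced order parameter~(\ref{eq:order_parameter_reduced}).

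Concretely, I would first reindex the $NM$ oscillators by the pair (layer $l$, node $i$) and use that, along the broadcast trajectory, the summand no longer depends on $i$:
\begin{equation}
\sum_{j=1}^{NM}\exp\!\big(\i\,\theta_j^{\ast}(t)\big)=\sum_{l=1}^{M}\sum_{i=1}^{N}\exp\!\big(\i\,\bar{\theta}_l^{\ast}(t)\big)=N\sum_{l=1}^{M}\exp\!\big(\i\,\bar{\theta}_l^{\ast}(t)\big).
\end{equation}
Taking absolute values and dividing by $NM$ then gives
\begin{equation}
R(t)=\frac{1}{NM}\left|N\sum_{l=1}^{M}\exp\!\big(\i\,\bar{\theta}_l^{\ast}(t)\big)\right|=\frac{1}{M}\left|\sum_{l=1}^{M}\exp\!\big(\i\,\bar{\theta}_l^{\ast}(t)\big)\right|=\bar{R}(t),
\end{equation}
which is exactly the claim; since the identity holds pointwise in $t$, it holds for all $t\ge 0$.

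There is no real obstacle here — the only thing worth checking is that the statement is non-vacuous, i.e.\ that the broadcast trajectory $\bm{\theta}^{\ast}(t)$ genuinely solves the Kuramoto model on $\bm{A}$ whenever $\bar{\bm{\theta}}^{\ast}(t)$ solves the reduced model; this is precisely the content of the broadcasting construction recalled above and uses the row-regularity of the off-diagonal blocks $\bm{A}_{lk}$. I would also remark that the clean factor-of-$N$ cancellation is special to layers of equal size: in the unequal-size setting of Remark~\ref{rem:different_nodes} the sum becomes $\sum_l N_l\exp(\i\,\bar{\theta}_l^{\ast})$, so $R=\bar{R}$ need not hold and the equality of order parameters should be stated only for layers sharing a common size $N$.
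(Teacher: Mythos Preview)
Your proof is correct and is precisely the ``direct calculation'' the paper alludes to but does not spell out; the paper offers no alternative argument. Your additional remark on the unequal-size case is a useful observation that the paper does not make explicit.
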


\section{Stability of equilibrium points}\label{sec:stability_analysis}

We now propose to analyze the existence and stability of some equilibria for \revision{a network of networks} through our knowledge of the existence and stability properties of equilibria in the reduced system. Of course, there is a loss in complexity when considering the reduced system and we cannot guarantee that its analysis provides a full picture of the whole \revision{multi-level} system. However, we show that some crucial information can be obtained from the reduced system by simply broadcasting a known solution thereof to the \revision{multi-level} network. 

In this section, we require that connections between each pair of connected \revision{areas} are dense, which we approximate by a mean-field (or all-to-all) coupling. In other words, we assume that if layers $l$ and $k$ are connected, then each node in \revision{area} $l$ is connected to each node in \revision{area} $k$. 
\revision{As we demonstrate later, we need to consider this assumption to utilize some known results from spectral graph theory when we analyze the correspondence between the multi-level system and the reduced one.} Whereas these assumptions formally restrict the applicability of our method, they still cover a rather wide class of networked systems. It is furthermore quite standard to consider identical coupling strength within a networked system and to approximate unknown or uncertain couplings by mean-field interactions~\cite{bovier2006statistical,chayes2009mean,kadanoff2009more,strogatz1988phase-locking,dorfler2014synchronization}. Formally, the above assumptions translate as:
\begin{enumerate}
    \item $\bm{A}_{ll}$ is the adjacency matrix of an undirected, connected, weighted graph with positive weights. 
    \item For $l \neq k$, $\bm{A}_{lk}= \epsilon_{lk} \bm{1}_{N}$ where $\bm{1}_{N}$ is a matrix $N \times N$ with all entries equal to $1$. 
\end{enumerate}

As observed in the previous section, the (simpler) reduced system can be used to study the dynamics of the \revision{multi-level} network. In particular, equilibrium points of the reduced system can be broadcast to equilibrium points of the \revision{multi-level} one.

However, for stability analysis, we cannot extend the idea in a straightforward way. The reason is that a neighborhood of an equilibrium point for the \revision{network of networks} $\bm{\theta}^{\ast}$ is $NM$-dimensional, while a neighborhood of an equilibrium point for the reduced system $\bm{\bar{\theta}}^{\ast}$ is $M$-dimensional. Broadcasting the neighborhood of $\bar{\bm \theta}^{\ast}$
\begin{equation}
    B_{\varepsilon}(\bar{\bm \theta}^{\ast}) = \left\{\bar{\bm \theta}^{\ast} + \bm{\varepsilon} ~|~ \|\bm{\varepsilon}\|_2<\varepsilon\right\}\, ,
\end{equation}
to the \revision{multi-level} system yields an $M$-dimensional subset of the $NM$-dimensional neighborhood $B_\varepsilon(\bm{\theta})$. 

In order to perform a stability analysis of the broadcasted fixed point, one needs a more in-depth analysis of the Jacobian matrix. Fortunately, we show that such an analysis can still be performed, and leverages our knowledge of the fixed point for the reduced system. 

In order to study the stability of these solutions we can use the Jacobian of each system. Here we denote the Jacobian for the \revision{multi-level} system at the equilibrium point $\bm{\theta}^{\ast}$ as $J(\bm{\theta}^{\ast}) = J_{\bm{A}}(\bm{\theta}^{\ast})$, and the Jacobian for the reduced system at the equilibrium point $\bar{\bm{\theta}}^{\ast}$ as $J(\bar{\bm{\theta}}^{\ast}) = J_{\bar{\bm{A}}}(\bar{\bm{\theta}}^{\ast})$.

We first recall the matrix $\bar{\bm{A}}$, defined in Eq. (\ref{eq:matrix_reduced_system}). Then, we notice that main diagonal of $\bar{\bm{A}}$ does not affect the dynamics of the system (since $\sin(\bar{\theta}_{i} - \bar{\theta}_{i}) = 0$), so we can write the matrix as
\begin{equation}
    \bar{\bm{A}}= 
\begin{pmatrix}
0 & r_{\bm{A}_{12}} & \cdots & r_{\bm{A}_{1M}} \\
r_{\bm{A}_{21}} & 0 & \cdots & r_{\bm{A}_{2M}} \\
\vdots  & \vdots  & \ddots & \vdots  \\
r_{\bm{A}_{M1}} & r_{\bm{A}_{M2}} & \cdots & 0 
\end{pmatrix},
\end{equation}
where, by definition, $r_{\bm{A}_{ij}}=N \epsilon_{ij}$. Based on this matrix, we can now write the Jacobian for the reduced system as
\begin{equation}
J(\bar{\bm{\theta}}^{\ast})=\begin{pmatrix}
-\lambda_1 & r_{\bm{A}_{12}}\cos(\bar{\theta}^{\ast}_{2}-\bar{\theta}^{\ast}_{1}) & \cdots & r_{\bm{A}_{1M}}\cos(\bar{\theta}^{\ast}_{M}-\bar{\theta}^{\ast}_{1}) \\
r_{\bm{A}_{21}}\cos(\bar{\theta}^{\ast}_{1}-\bar{\theta}^{\ast}_{2}) & -\lambda_2 & \cdots & r_{\bm{A}_{2M}}\cos(\bar{\theta}^{\ast}_{M}-\bar{\theta}^{\ast}_{2}) \\
\vdots  & \vdots  & \ddots & \vdots  \\
r_{\bm{A}_{M1}}\cos(\bar{\theta}^{\ast}_{1}-\bar{\theta}^{\ast}_{M}) & r_{\bm{A}_{M2}}\cos(\bar{\theta}^{\ast}_{2}-\bar{\theta}^{\ast}_{M}) & \cdots &  -\lambda_{M} 
\end{pmatrix},
\label{eq:jacobian_reduced system}
\end{equation}
where
\begin{equation}
    \lambda_i =\sum_{j=1, j\neq i}^M r_{\bm{A}_{ij}}\cos(\bar{\theta}^{\ast}_{j}-\bar{\theta}^{\ast}_{i}).
    \label{eq:lambda}
\end{equation}
In particular, the above Jacobian matrices are semimagic square matrices with line sum zero. 

In order to compute the Jacobian for the \revision{multi-level} system $J(\bm{\theta}^{\ast})$, we first recall the definition of the Laplacian. For an undirected weighted graph $H$ with adjacency matrix $\bm{B}_{H}$ and degree matrix $\bm{D}_H$, the Laplacian is defined as:
\begin{equation}
    \bm{L}_{H} = \bm{D}_{H} - \bm{B}_{H}.
    \label{eq:laplacian_definition}
\end{equation}
We note that, as long as $H$ is undirected, $\bm{L}_{H}$ is always a symmetric semi-magic square matrix (with row sum equal to zero). Regarding the spectrum of $\bm{L}_H$, we have the following proposition. \revision{We emphasize that this proposition has been studied and demonstrated already in \cite{mohar1991laplacian}. We show this proposition and its proof here for completeness.}
\begin{prop} \label{prop:laplacian}
Let $\bm{L}_{H}$ be the weighted Laplacian matrix of an undirected, connected, weighted graph $H$. Then $\bm{L}_{H}$ has a unique zero eigenvalue and all other eigenvalues are strictly positive. 
\end{prop}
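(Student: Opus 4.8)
The plan is to run the classical quadratic-form argument for graph Laplacians, adapted to positive edge weights. First I would record the basic identity: for any vector $\bm{x} = (x_1,\dots,x_n)^{\top} \in \R^{n}$,
\[
\bm{x}^{\top} \bm{L}_{H} \bm{x} = \sum_{i<j} (\bm{B}_{H})_{ij}\,(x_i - x_j)^2 ,
\]
which follows by expanding $\bm{L}_{H} = \bm{D}_{H} - \bm{B}_{H}$ and using $(\bm{D}_{H})_{ii} = \sum_{j} (\bm{B}_{H})_{ij}$. Since all weights $(\bm{B}_{H})_{ij}$ are nonnegative — and strictly positive exactly on the edges of $H$ — this exhibits $\bm{L}_{H}$ as positive semidefinite; being symmetric (as $H$ is undirected), it then has real, nonnegative eigenvalues only.

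Next I would pin down the zero eigenvalue: since $\bm{L}_{H}$ has all row sums equal to zero, $\bm{L}_{H}\bm{1} = \bm{0}$, so $0$ is an eigenvalue with the all-ones vector $\bm{1}$ in its eigenspace. It remains to show this eigenspace is exactly one-dimensional, which is the only place connectedness is used. Suppose $\bm{L}_{H}\bm{x} = \bm{0}$. Then $\bm{x}^{\top}\bm{L}_{H}\bm{x} = 0$, so by the identity above $\sum_{i<j} (\bm{B}_{H})_{ij}(x_i - x_j)^2 = 0$; every summand is nonnegative, hence each vanishes, forcing $x_i = x_j$ whenever $(\bm{B}_{H})_{ij} > 0$, i.e.\ whenever $i$ and $j$ are adjacent in $H$. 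Because $H$ is connected, any two vertices are joined by a path, and propagating the equality edge by edge along such a path gives $x_i = x_j$ for all $i,j$; thus $\bm{x} \in \mathrm{span}(\bm{1})$, so $\ker \bm{L}_{H} = \mathrm{span}(\bm{1})$ has dimension $1$.

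Finally, since $\bm{L}_{H}$ is real symmetric it is orthogonally diagonalizable, so the algebraic multiplicity of the eigenvalue $0$ equals its geometric multiplicity, namely $1$. Together with positive semidefiniteness, this says $0$ is a simple eigenvalue and all remaining eigenvalues are strictly positive, which is the claim. The only non-routine ingredient is the kernel computation, and inside it the single substantive input is connectivity of $H$, invoked precisely to upgrade ``$\bm{x}$ constant across each edge'' to ``$\bm{x}$ globally constant''; everything else is standard linear algebra, so I expect no real obstacle beyond writing this step cleanly.
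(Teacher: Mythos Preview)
Your proof is correct and follows essentially the same route as the paper: both use the quadratic-form identity $\bm{x}^{\top}\bm{L}_{H}\bm{x}=\sum_{i<j}(\bm{B}_{H})_{ij}(x_i-x_j)^2$ together with connectedness to show the kernel is $\mathrm{span}(\bm{1})$. The only cosmetic difference is that the paper invokes Gershgorin's Discs Theorem to obtain nonnegativity of the spectrum, whereas you read positive semidefiniteness directly off the quadratic form---arguably the cleaner choice, since the identity is needed anyway for the kernel step.
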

\begin{proof}
By Gershgorin's Discs Theorem \cite[Theroem 6.1.1]{horn2012matrix}, all eigenvalues of $\bm{L}_H$ are nonnegative. Note also that the eigenvectors of $\bm{L}_H$ form an orthonormal basis of $\mathbb{R}^n$, because $\bm{L}_{H}$ is symmetric. Now assume that $\bm{v}$ is an eigenvector of $\bm{L}_H$ associated with the eigenvalue $0$. 
Then,
\begin{align}
    0 &= \bm{v}^\top\bm{L}_H\bm{v} = \sum_{i<j}a_{ij}(v_i-v_j)^2\, ,
\end{align}
where $a_{ij}$ is the weight of the edge linking nodes $i$ and $j$. Each term in the sum above is necessarily zero. Therefore, for any pair of nodes $(i,j)$ that are connected, $v_i=v_j$. As the graph is connected, the eigenvector $\bm{v}$ is necessarily a multiple of the vector $(1,\ldots, 1)^\top$. Hence $\bm{L}_H$ has a unique zero eigenvalue and all other eigenvalues are strictly positive. 
\end{proof}

We now compute the Jacobian $J(\bm{\theta}^{\ast})$. Here, $G_{i}$ is the graph representing the $i^{\mathrm{th}}$ \revision{area}, and $\epsilon_{ij}$ is the coupling strength between nodes in the \revision{area} $i$ and $j$. By definition, for two indices $(i_1,j_1)$ such that $i_1 \in V(G_i), j_1 \in V(G_j)$ where $i \neq j$ then
\begin{equation}
[J(\bm{\theta}^{\ast})]_{i_1, j_1} = A_{i_1, j_1} \cos(\bar{\theta}_{j}^{\ast}-\bar{\theta}_{i}^{\ast})= \epsilon_{ij} \cos(\bar{\theta}_{j}^{\ast}-\bar{\theta}_{i}^{\ast}).
\end{equation}
We emphasize that for the stability analysis, the coupling between \revision{areas} is considered uniform, i.e., oscillators in the \revision{area} $i$ are connected to oscillators in the \revision{area} $j$ with the same coupling strength.

If $i_1, i_2 \in V(G_i)$ and $i_1 \neq i_2,$ then 
\begin{equation}
[J(\bm{\theta}^{\ast})]_{i_1, i_2} = (\bm{A}_{ii})_{i_1, i_2} \cos(\bar{\theta}_{i}^{\ast}-\bar{\theta}_{i}^{\ast}) = (\bm{A}_{ii})_{i_1, i_2}. 
\end{equation}
Finally, we need to consider the case $i_1=i_2$ and $i_1 \in V(G_i)$. For this part, we observe that $J(\bm{\theta^}{\ast})$ is a semimagic square matrix with line sums equal to zero. We can then see that
\begin{equation}
[J(\bm{\theta}^{\ast})]_{i_1, i_1}= -\lambda_i- \deg(i_1).
\end{equation}
with $\deg(i_1)$ denoting the weighed degree of node $i_1$ \revision{and $\lambda_{i}$ being defined in Eq. (\ref{eq:lambda}).} 

By combining these facts, we can write the Jacobian for the \revision{network of networks} at the equilibrium point $\bm{\theta}^{\ast}$ as:
\begin{equation}
J(\bm{\theta}^{\ast}) = J_{\bm{A}}(\bm{\theta}^{\ast}) = \left(\begin{array}{c|c|c|c}
-\lambda_1 \bm{I} -\bm{L}_{\bm{A}_{11}} & \epsilon_{12} \cos(\bar{\theta}_{2}^{\ast}-\bar{\theta}_{1}^{\ast})) \ind & \cdots & \epsilon_{1M} \cos(\bar{\theta}_{M}^{\ast}-\bar{\theta}_{1}^{\ast})\ind \\
\hline
\epsilon_{21} \cos(\bar{\theta}_{1}^{\ast}-\bar{\theta}_{2}^{\ast}) \ind & -\lambda_2 \bm{I} - \bm{L}_{\bm{A}_{22}} & \cdots &\epsilon_{2M} \cos(\bar{\theta}_{M}^{\ast}-\bar{\theta}_{2}^{\ast}) \ind \\
\hline
\vdots & \vdots & \ddots & \vdots \\
\hline
\epsilon_{M1} \cos(\bar{\theta}_{1}^{\ast}-\bar{\theta}_{M}^{\ast}) \ind & \epsilon_{M2} \cos(\bar{\theta}_{2}^{\ast}-\bar{\theta}_{M}^{\ast}) \ind & \cdots & -\lambda_M \bm{I} - \bm{L}_{\bm{A}_{MM}}
\end{array}\right).
\label{eq:jacobian_multilayer_system}
\end{equation}
where, we recall that $\epsilon_{ij}$ is the coupling between oscillators in \revision{areas} $i$ and $j$.

One realizes, in particular, that $J(\bm{\theta}^{\ast})$ is a joined union of semimagic square matrices (see \cite{doan2022joins, doan2022sprectrum} for details). Furthermore, we have the following equality 
\begin{equation}\label{eq:Jbar}
    \overline{J_{\bm{A}}(\bm{\theta}^{\ast})}= J_{\bar{\bm{A}}}(\bar{\bm{\theta}}^{\ast}).
\end{equation}
It is important to emphasize that Eq.~\eqref{eq:Jbar} is valid if the equilibrium point $\bm{\theta}^{\ast}$ of the \revision{network of networks} is obtained through the broadcasting of the solution $\bar{\bm{\theta}}^{\ast}$ of the reduced system - see Eqs. (\ref{eq:solution_reduced_system}) and (\ref{eq:solution_multilayer_spread}).

We summarize these ideas in the following proposition:
\begin{prop} \label{prop:jacobian}
Suppose $\bar{\bm{\theta}}^{\ast}$ is an equilibrium point of the Kuramoto model associated with the reduced system ($\bar{\bm{A}}$). Then $\bm{\theta}^{\ast}$ is an equilibrium point of the Kuramoto model on the \revision{multi-level} system ($\bm{A}$) following Eq. (\ref{eq:solution_multilayer_spread}). In addition to that, suppose that the matrices representing the inter-\revision{areas} connections ($\bm{A}_{ij}, \, i \neq j$) have all entries the same. The Jacobian $J_{\bar{\bm{A}}}(\bar{\bm{\theta}}^{\ast})$ and $J_{\bm{A}}(\bm{\theta}^{\ast})$ are given by Eq. (\ref{eq:jacobian_reduced system}) and Eq. (\ref{eq:jacobian_multilayer_system}), respectively.
Furthermore, we have the following relation 
\[ \overline{J_{\bm{A}}(\bm{\theta}^{\ast})}= J_{\bar{\bm{A}}}(\bar{\bm{\theta}}^{\ast}) .\] 
\end{prop}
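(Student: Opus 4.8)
The plan is to settle the three claims --- broadcasting an equilibrium, the explicit forms of the two Jacobians, and the reduction identity --- by direct computation with the Kuramoto vector field, writing $f_I(\bm{\theta}) = \omega + \sum_{J=1}^{NM} A_{IJ}\sin(\theta_J-\theta_I)$ on the multilayer side and $\bar f_i(\bar{\bm{\theta}}) = \omega + \sum_{j=1}^{M} \bar A_{ij}\sin(\bar\theta_j-\bar\theta_i)$ on the reduced side.

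First I would check that broadcasting preserves equilibria. Fix a node $I$ lying in layer $l$; in the broadcast state $\theta_I^\ast = \bar\theta_l^\ast$ and $\theta_J^\ast = \bar\theta_k^\ast$ whenever $J$ lies in layer $k$, so grouping the defining sum by layers gives $f_I(\bm{\theta}^\ast)-\omega = \sum_{k=1}^{M}\bigl(\sum_{j\in\text{layer }k}(\bm{A}_{lk})_{Ij}\bigr)\sin(\bar\theta_k^\ast-\bar\theta_l^\ast)$. The $k=l$ summand vanishes because $\sin 0 = 0$, and for $k\neq l$ the inner sum is precisely the common row sum $r_{\bm{A}_{lk}}$ by the row-regularity hypothesis on the off-diagonal blocks; hence $f_I(\bm{\theta}^\ast)-\omega = \sum_{k=1}^{M}\bar A_{lk}\sin(\bar\theta_k^\ast-\bar\theta_l^\ast) = \bar f_l(\bar{\bm{\theta}}^\ast)-\omega = 0$. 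Note this argument uses only the regularity of the off-diagonal blocks, not assumption 2.

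Next I would compute the Jacobians by differentiation: $\partial f_I/\partial\theta_K = A_{IK}\cos(\theta_K-\theta_I)$ for $K\neq I$ and $\partial f_I/\partial\theta_I = -\sum_{J\neq I}A_{IJ}\cos(\theta_J-\theta_I)$. Evaluating at $\bm{\theta}^\ast$ and invoking assumption 2 ($\bm{A}_{lk} = \epsilon_{lk}\ind$ for $l\neq k$), one gets for $i_1\in V(G_l)$, $j_1\in V(G_k)$ with $l\neq k$ the entry $\epsilon_{lk}\cos(\bar\theta_k^\ast-\bar\theta_l^\ast)$, i.e. off-diagonal block $\epsilon_{lk}\cos(\bar\theta_k^\ast-\bar\theta_l^\ast)\,\ind$; for $i_1\neq i_2$ both in $V(G_l)$ the cosine factor is $\cos 0 = 1$, giving $(\bm{A}_{ll})_{i_1 i_2}$; and the diagonal entry is $-\sum_{j\in V(G_l),\,j\neq i_1}(\bm{A}_{ll})_{i_1 j} - \sum_{k\neq l}N\epsilon_{lk}\cos(\bar\theta_k^\ast-\bar\theta_l^\ast) = -\deg(i_1)-\lambda_l$, using $r_{\bm{A}_{lk}} = N\epsilon_{lk}$. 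Since $\bm{L}_{\bm{A}_{ll}} = \bm{D}_{\bm{A}_{ll}} - \bm{A}_{ll}$ has diagonal $\deg(i_1)$ and off-diagonal $-(\bm{A}_{ll})_{i_1 i_2}$, the diagonal block is exactly $-\lambda_l\bm{I} - \bm{L}_{\bm{A}_{ll}}$, which is Eq.~\eqref{eq:jacobian_multilayer_system}; the same computation carried out in $M$ variables for $\bar f$ yields Eq.~\eqref{eq:jacobian_reduced system}.

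Finally, for $\overline{J_{\bm{A}}(\bm{\theta}^\ast)} = J_{\bar{\bm{A}}}(\bar{\bm{\theta}}^\ast)$ I would first observe that every block of $J_{\bm{A}}(\bm{\theta}^\ast)$ is semimagic --- the off-diagonal blocks are scalar multiples of $\ind$, and the diagonal block $-\lambda_l\bm{I} - \bm{L}_{\bm{A}_{ll}}$ is semimagic because a graph Laplacian has every row (and column) sum equal to zero --- so the reduction $\overline{(\cdot)}$ is well defined on it. Taking row sums block by block: the $(l,k)$ off-diagonal block reduces to $N\epsilon_{lk}\cos(\bar\theta_k^\ast-\bar\theta_l^\ast) = r_{\bm{A}_{lk}}\cos(\bar\theta_k^\ast-\bar\theta_l^\ast)$, the $(l,k)$ entry of $J_{\bar{\bm{A}}}(\bar{\bm{\theta}}^\ast)$, while the $(l,l)$ block reduces to $-\lambda_l + 0 = -\lambda_l$, its $(l,l)$ entry. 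The computations are routine; the one point worth care --- and the reason assumptions 1--2 are imposed --- is recognizing that the diagonal blocks of $J_{\bm{A}}(\bm{\theta}^\ast)$ are themselves semimagic, so that the join/reduction formalism of \cite{doan2022joins,doan2022sprectrum} applies to the Jacobian and not only to $\bm{A}$; this rests on the vanishing row sums of the $\bm{L}_{\bm{A}_{ll}}$ and on $\bm{\theta}^\ast$ being a broadcast solution, which makes every cosine factor constant on each block.
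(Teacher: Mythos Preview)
Your proof is correct and follows essentially the same approach as the paper: the proposition is stated there as a summary of the direct computations immediately preceding it (the three-case entrywise evaluation of $J_{\bm A}(\bm\theta^\ast)$ and the observation that the result is a joined union of semimagic blocks), and your argument reproduces those computations, only with a more explicit verification that broadcasting preserves equilibria and that each block is indeed semimagic.
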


For the next proposition, we need to introduce a couple of notations. 
For a set $S=\{s_1, s_2, \ldots, s_m \} \subset \C^m$ and $a, b \in \C$, we denote 
\[ aS - b= \{as_1-b, as_2-b, \ldots, as_m-b \} .\] 
We also denote by $S \setminus s$ the set difference of $S$ by $s.$ 
By \cite[Theorem 3.3]{doan2022sprectrum}, we have the following statement about the spectrum of $J_{\bm{A}}(\bm{\theta}^{\ast})$:
\begin{prop} \label{prop:spectrum}
The spectrum of $J_{\bm A}(\bm{\theta}^{\ast})$ can be defined in terms of:
\begin{itemize}
    \item The spectra of the Laplacian matrices of each \revision{area}, $\bm{L}_{\bm{A}_i}$;
    \item The Jacobian of the reduced system, $J_{\bar{\bm A}}(\bar{\bm \theta}^{\ast})$. 
\end{itemize}
Namely, 
\begin{equation}
    \Spec(\bm{J}_{\bm{A}}(\bm{\theta}^{\ast})) = \bigcup_{i=1}^d \left\{(-\Spec(\bm{L}_{\bm{A}_i}) \setminus \{0\}) - \lambda_i\right\} \cup \Spec(\bm{J}_{\bar{\bm{A}}}(\bar{\bm \theta}^{\ast}))\, .
\end{equation}
\end{prop}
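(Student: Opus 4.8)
The plan is to exhibit an explicit $J_{\bm{A}}(\bm{\theta}^{\ast})$-invariant direct-sum decomposition of $\R^{NM}$ that separates the two pieces of the claimed spectrum; this is the concrete incarnation, in our setting, of \cite[Theorem 3.3]{doan2022sprectrum} on joined unions of semimagic matrices. First I would write $\R^{NM} = \bigoplus_{i=1}^{M} \R^{N}$ according to the layer blocks and, in each block, split $\R^{N} = \R\,(1,\ldots,1)^{\top} \oplus (1,\ldots,1)^{\perp}$. This gives an orthogonal direct sum $\R^{NM} = W \oplus W^{\perp}$, where $W = \{(c_1(1,\ldots,1)^{\top}, \ldots, c_M(1,\ldots,1)^{\top}) : c_i \in \R\}$ is the $M$-dimensional space of vectors that are constant on each layer, and $W^{\perp} = \bigoplus_i \big((1,\ldots,1)^{\perp}\big)^{(i)}$ has dimension $M(N-1)$.

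Second, I would check directly from the block form \eqref{eq:jacobian_multilayer_system} that both summands are invariant under $J_{\bm{A}}(\bm{\theta}^{\ast})$. For $W^{\perp}$: a vector supported in layer $i$ and orthogonal to the all-ones vector is annihilated by every off-diagonal block $\epsilon_{ji}\cos(\bar\theta_i^{\ast}-\bar\theta_j^{\ast})\bm{1}_N$ (all of whose columns are equal), while the diagonal block $-\lambda_i\bm{I}-\bm{L}_{\bm{A}_{ii}}$ preserves $(1,\ldots,1)^{\perp}$ because $\bm{L}_{\bm{A}_{ii}}$ is symmetric with the all-ones vector in its kernel. Hence $J_{\bm{A}}(\bm{\theta}^{\ast})$ acts on $W^{\perp}$ block-diagonally as $\bigoplus_i \big(-\lambda_i\bm{I}-\bm{L}_{\bm{A}_{ii}}\big)\big|_{(1,\ldots,1)^{\perp}}$, and since by Proposition~\ref{prop:laplacian} the zero eigenvalue of $\bm{L}_{\bm{A}_{ii}}$ is simple, the spectrum of the $i$-th summand is exactly $\big(-\Spec(\bm{L}_{\bm{A}_{ii}})\setminus\{0\}\big)-\lambda_i$. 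For $W$: applying $J_{\bm{A}}(\bm{\theta}^{\ast})$ to $(c_1(1,\ldots,1)^{\top},\ldots,c_M(1,\ldots,1)^{\top})$ and using $\bm{L}_{\bm{A}_{ii}}(1,\ldots,1)^{\top}=0$ and $\bm{1}_N(1,\ldots,1)^{\top}=N(1,\ldots,1)^{\top}$, the $i$-th block becomes $\big(-\lambda_i c_i + \sum_{j\neq i} N\epsilon_{ij}\cos(\bar\theta_j^{\ast}-\bar\theta_i^{\ast})c_j\big)(1,\ldots,1)^{\top}$; since $N\epsilon_{ij}=r_{\bm{A}_{ij}}$, the matrix of $J_{\bm{A}}(\bm{\theta}^{\ast})|_W$ in the coordinates $(c_1,\ldots,c_M)$ is precisely the reduced Jacobian $J_{\bar{\bm{A}}}(\bar{\bm{\theta}}^{\ast})$, which is exactly the relation $\overline{J_{\bm{A}}(\bm{\theta}^{\ast})}=J_{\bar{\bm{A}}}(\bar{\bm{\theta}}^{\ast})$ already recorded in Proposition~\ref{prop:jacobian}.

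Finally, since $\R^{NM}=W\oplus W^{\perp}$ with both summands invariant, in a basis adapted to the decomposition $J_{\bm{A}}(\bm{\theta}^{\ast})$ is block-diagonal, so $\Spec(J_{\bm{A}}(\bm{\theta}^{\ast}))$ is the multiset union of $\Spec(J_{\bm{A}}(\bm{\theta}^{\ast})|_W)=\Spec(J_{\bar{\bm{A}}}(\bar{\bm{\theta}}^{\ast}))$ and $\Spec(J_{\bm{A}}(\bm{\theta}^{\ast})|_{W^{\perp}})=\bigcup_{i=1}^{M}\big\{(-\Spec(\bm{L}_{\bm{A}_i})\setminus\{0\})-\lambda_i\big\}$, which is the asserted formula. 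I do not expect a genuine obstacle: the only point requiring care is bookkeeping of multiplicities — ensuring exactly one copy of the zero Laplacian eigenvalue per layer is removed in the $W^{\perp}$ part and reappears (in the guise of the row sum $-\lambda_i$) inside $\Spec(J_{\bar{\bm{A}}}(\bar{\bm{\theta}}^{\ast}))$ — and this is exactly what the simplicity statement of Proposition~\ref{prop:laplacian} supplies. Alternatively, one may bypass the explicit decomposition entirely by observing that $J_{\bm{A}}(\bm{\theta}^{\ast})$ is a joined union of the semimagic matrices $-\lambda_i\bm{I}-\bm{L}_{\bm{A}_{ii}}$ (each with constant row sum $-\lambda_i$) whose quotient matrix is $J_{\bar{\bm{A}}}(\bar{\bm{\theta}}^{\ast})$, and then quote \cite[Theorem 3.3]{doan2022sprectrum} verbatim.
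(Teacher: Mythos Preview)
Your proposal is correct. The paper itself does not give a proof of this proposition at all: it simply invokes \cite[Theorem 3.3]{doan2022sprectrum} in the sentence preceding the statement, exactly the alternative you mention in your final paragraph. Your main argument --- the explicit invariant decomposition $\R^{NM}=W\oplus W^{\perp}$ into layer-constant vectors and their orthogonal complement, followed by a direct computation of the restricted actions --- is precisely the specialization of that cited theorem to the present situation, so you have effectively unpacked the black box the authors appeal to. Both routes are valid; yours is self-contained and makes transparent why the simplicity of the zero Laplacian eigenvalue (Proposition~\ref{prop:laplacian}) is needed for the multiplicity bookkeeping, whereas the paper's one-line citation keeps the exposition short at the cost of deferring the actual linear algebra to the external reference.
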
 

In Sec. \ref{sec:main_idea_kuramoto_multilayer}, we showed how we can use the reduced system to broadcast solutions to the \revision{Kuramoto model on a network of networks}. We can now study the stability of these solutions.
\begin{prop}
The fixed point $\bar{\bm \theta}^{\ast}$ for the reduced system is linearly stable if and only if the corresponding broadcasted fixed point for the \revision{multi-level} system is linearly stable. 
\end{prop}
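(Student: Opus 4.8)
The plan is to deduce the equivalence directly from the spectral decomposition of Proposition~\ref{prop:spectrum}, using Proposition~\ref{prop:jacobian} to identify the Jacobian at the broadcasted fixed point with Eq.~\eqref{eq:jacobian_multilayer_system}, and then extracting sign information from negative semidefiniteness. First I would record two structural facts about the two Jacobians. Since the inter-layer couplings are symmetric ($\epsilon_{ij}=\epsilon_{ji}$), the cosine is even, and each $\bm{L}_{\bm{A}_{ii}}$ is symmetric, both $J_{\bar{\bm A}}(\bar{\bm\theta}^{\ast})$ of Eq.~\eqref{eq:jacobian_reduced system} and $J_{\bm A}(\bm\theta^{\ast})$ of Eq.~\eqref{eq:jacobian_multilayer_system} are real symmetric, so their spectra are real; and both are semimagic with zero line sum, so the all-ones vector lies in each kernel and $0$ is always an eigenvalue of both. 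I will take ``linearly stable'' in the standard sense for phase-locked Kuramoto states: all eigenvalues of the Jacobian are $\le 0$ and the eigenvalue $0$ is simple (equivalently, the Jacobian restricted to $\bm{1}^{\perp}$ is negative definite).

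Next I would invoke Proposition~\ref{prop:spectrum}, which (writing $\bm{A}_{ii}$ for the intra-layer graphs and $i=1,\dots,M$) gives, with multiplicities,
\[
\Spec\big(J_{\bm A}(\bm\theta^{\ast})\big)=\Big(\bigcup_{i=1}^{M}\big\{-\mu-\lambda_i\ :\ \mu\in\Spec(\bm{L}_{\bm{A}_{ii}})\setminus\{0\}\big\}\Big)\ \cup\ \Spec\big(J_{\bar{\bm A}}(\bar{\bm\theta}^{\ast})\big).
\]
Thus the multilayer spectrum is the reduced spectrum together with the ``transversal'' eigenvalues $-\mu-\lambda_i$, and the whole argument reduces to controlling the sign of these transversal eigenvalues.

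For the direction ``reduced stable $\Rightarrow$ multilayer stable'': if $\bar{\bm\theta}^{\ast}$ is linearly stable then $J_{\bar{\bm A}}(\bar{\bm\theta}^{\ast})\preceq 0$, hence each diagonal entry $-\lambda_i$ is $\le 0$, i.e.\ $\lambda_i\ge 0$. Since each intra-layer graph is connected (our first standing assumption), Proposition~\ref{prop:laplacian} gives $\mu>0$ for every $\mu\in\Spec(\bm{L}_{\bm{A}_{ii}})\setminus\{0\}$, so every transversal eigenvalue satisfies $-\mu-\lambda_i<0$. The remaining eigenvalues of $J_{\bm A}(\bm\theta^{\ast})$ are exactly those of $J_{\bar{\bm A}}(\bar{\bm\theta}^{\ast})$ --- all $\le 0$, with $0$ simple --- so $\Spec(J_{\bm A}(\bm\theta^{\ast}))\subset(-\infty,0]$ and $0$ occurs with multiplicity exactly $1$. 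Hence the broadcasted fixed point is linearly stable. For the converse, I would use that $\Spec(J_{\bar{\bm A}}(\bar{\bm\theta}^{\ast}))$ is a sub-multiset of $\Spec(J_{\bm A}(\bm\theta^{\ast}))$ by the same decomposition: if the latter lies in $(-\infty,0]$ with simple zero, then the former lies in $(-\infty,0]$, its nonzero elements are nonzero elements of the multilayer spectrum hence strictly negative, and the multiplicity of $0$ is at most $1$; since $0$ is always an eigenvalue of $J_{\bar{\bm A}}(\bar{\bm\theta}^{\ast})$ it is simple, so $\bar{\bm\theta}^{\ast}$ is linearly stable.

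The one genuinely non-routine step --- and the point I would treat most carefully --- is the sign of $\lambda_i$. A priori $\lambda_i=\sum_{j\neq i}r_{\bm A_{ij}}\cos(\bar\theta^{\ast}_j-\bar\theta^{\ast}_i)$ is a signed sum with no obvious sign, but under the stability hypothesis it is forced to be nonnegative simply by reading it off the diagonal of the negative-semidefinite reduced Jacobian; this is exactly what makes the transversal eigenvalues strictly negative and lets the connectivity assumption feed through Proposition~\ref{prop:laplacian}. The only other care needed is the bookkeeping of the multiplicity of the trivial zero eigenvalue through Proposition~\ref{prop:spectrum}, so that linear stability in the intended (simple-zero) sense transfers in both directions; if preferred, the one-dimensionality of $\ker J_{\bm A}(\bm\theta^{\ast})$ can instead be verified directly from the block form in Eq.~\eqref{eq:jacobian_multilayer_system}.
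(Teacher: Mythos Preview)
Your proof is correct and follows essentially the same route as the paper's: invoke the spectral decomposition of Proposition~\ref{prop:spectrum}, use negative semidefiniteness of the reduced Jacobian to force $\lambda_i\ge 0$ (the paper phrases this via Sylvester's criterion, you via the sign of the diagonal entries, which is the same observation), then apply Proposition~\ref{prop:laplacian} to make the transversal eigenvalues strictly negative, and handle the converse by spectral inclusion. Your explicit bookkeeping of the simplicity of the zero eigenvalue is a welcome refinement that the paper leaves implicit.
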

\begin{proof}
Assume that $\bar{\bm{\theta}}^{\ast}$ is linearly stable, then $J_{\bar{A}}(\bar{\bm{\theta}}^{\ast})$ is symmetric negative-semidefinite. By Sylvester's criterion \cite[Theorem 3.3.12]{bazaraa2013nonlinear}, we know that $\lambda_i \geq 0.$ Furthermore, by Proposition \ref{prop:laplacian}, we know that all eigenvalues of $\bm{L}_{\bm{A}_i}$, except 0, are positive. These two facts show that all elements in 
\[ \bigcup_{i=1}^d \left\{ (-\Spec(\bm{L}_{\bm{A}_i}) \setminus \{0\})-\lambda_i  \right \}, \]
are negative. By Proposition \ref{prop:spectrum} and the assumption that $\bar{\bm{\theta}}^{\ast}$ is linearly stable, we conclude that all eigenvalues of $J_{\bm A}(\bm{\theta}^{\ast})$, other than $0$, must be negative. Hence $\bm{\theta}^{\ast}$ is linearly stable. 

The other direction is straightforward. Indeed, by Proposition~\ref{prop:spectrum}, the spectrum of $\bm{J}_{\bar{A}}(\bar{\bm \theta}^{\ast})$ is a subset of the spectrum of $\bm{J}_{\bm{A}}(\bm{\theta}^{\ast})$. Therefore, if $\bm{\theta}^{\ast}$ is linearly stable, then $\bar{\bm \theta}^{\ast}$ is linearly stable as well. 
\end{proof}

\section{Numerical simulations and examples}\label{sec:simulations}

In this section, we show examples and numerical simulations of \revision{Kuramoto oscillators on a network of networks} and its respective reduced version that we introduce in this paper. Figure \ref{fig:schematic_representation} shows a graphic representation of this approach. We study Kuramoto oscillators on \revision{multi-level} networks, where each \revision{area} is composed of $N$ nodes and has an internal connection structure and coupling (intra-coupling). These \revision{areas} ($M$ in total) are then coupled with an external connection structure (inter-coupling), thus leading to a two-levels coupling scheme (left). In this paper, we introduced a reduced ``inter-\revision{area}" representation that allows us to investigate the dynamics of the \revision{multi-level} system in a simplified way. In this simpler representation, each \revision{area} is given by a node, which is connected to other nodes following the inter-coupling scheme (right). As shown in Sec. \ref{sec:main_idea_kuramoto_multilayer}, the reduced system and the \revision{network of networks} have equivalent dynamics and we can broadcast solutions from the former to the latter.
\begin{figure}[htb]
    \centering
    \includegraphics[width=0.65\columnwidth]{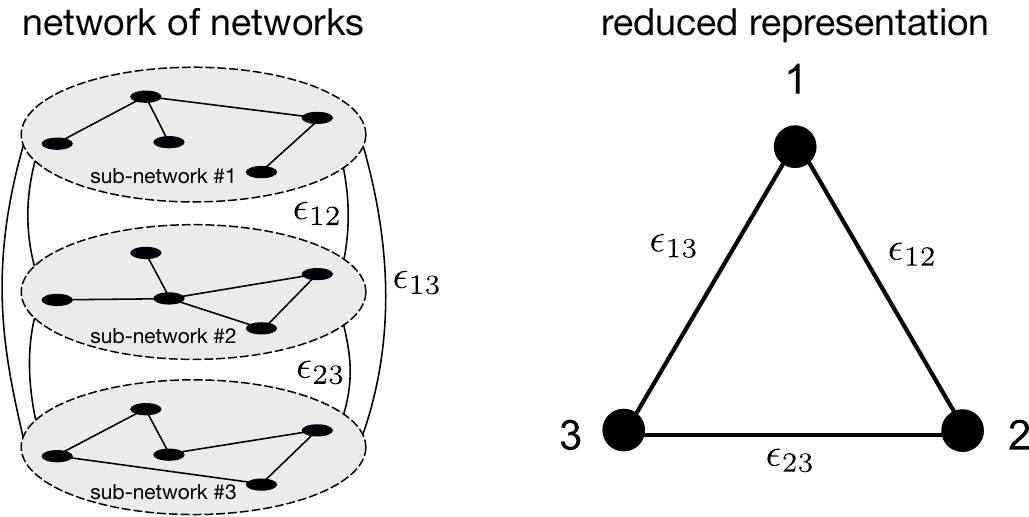}
    \caption{\textbf{Graphic representation of the system.} The \revision{network of networks} is given by $M$ \revision{sub-networks} with $N$ nodes each one. Here, we use a two-level system, where each \revision{sub-network (area)} has internal connections in addition to the inter-\revision{area} connections. Our approach allows us to study the dynamics of these systems using a reduced $M \times M$ system. This figure depicts an example with $M = 3$ \revision{sub-networks}.}
    \label{fig:schematic_representation}
\end{figure}

 \subsection{Phase synchronization}

We first consider a \revision{network of networks} with $M = 3$, where each \revision{sub-network} is composed of $N = 100$ nodes. The internal connection structure of each \revision{sub-network} is given by \revision{random networks. Particularly, we consider Erd\H{o}s-R\'enyi matrices with probability given by $0.15$}. The adjacency matrix for the internal coupling is given by $A_{ij, \, \text{intra}} = 1$ if nodes $i$ and $j$ are connected, and by $A_{ij, \, \text{intra}} = 0$ otherwise\revision{, while the internal coupling strength is given by $\epsilon_{\mathrm{intra}} = 1.0$}. The inter-layers coupling scheme is given by a complete graph, where all nodes in one layer are connected to all nodes in another layer with coupling strength \revision{that varies randomly between each pair of sub-networks in the interval of $\epsilon_{\mathrm{inter}} \in [0.005, 0.01]$}.

\begin{figure*}[htb]
    \centering
    \includegraphics[width=0.9\textwidth]{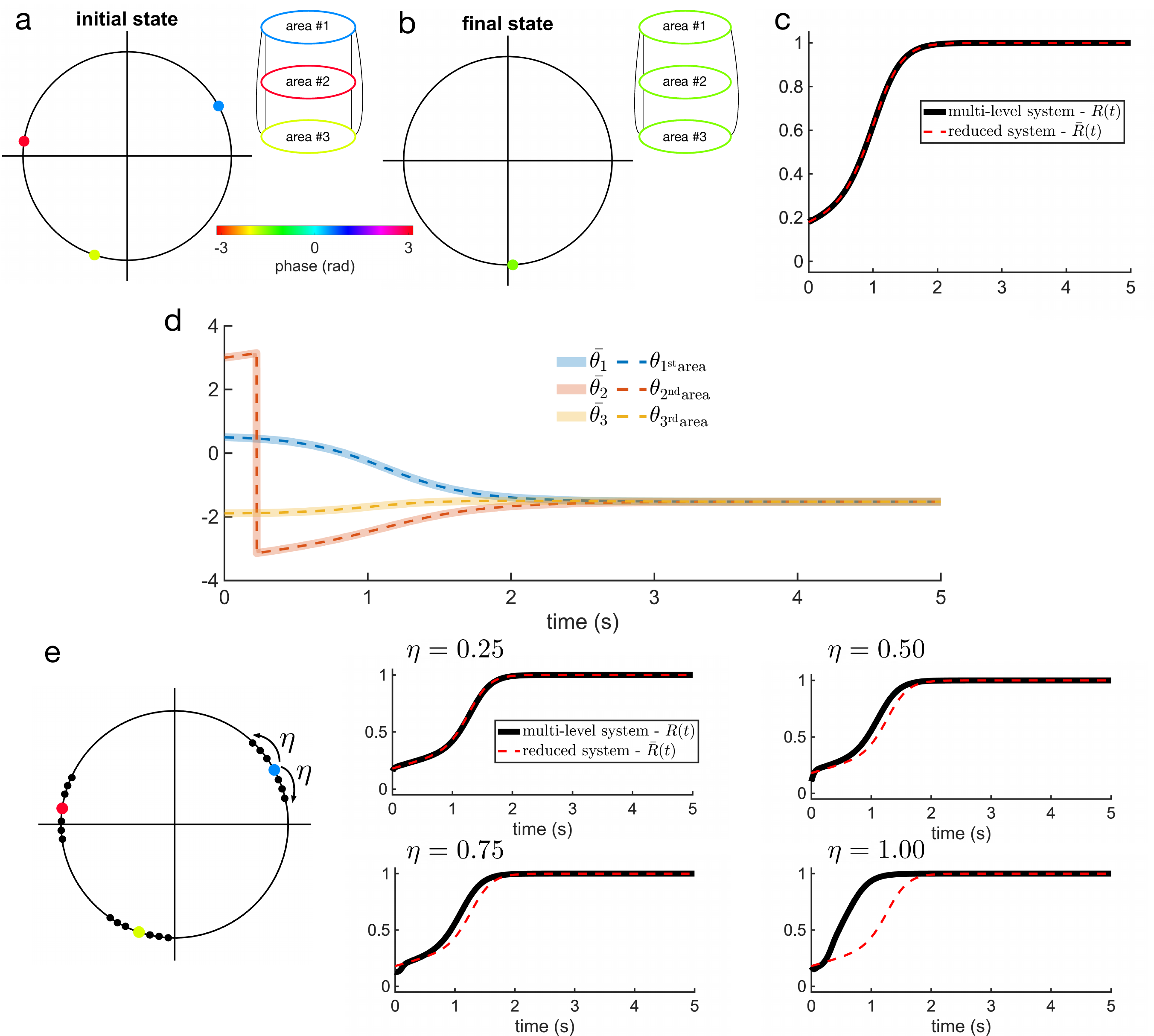}
    \caption{\textbf{Dynamics on a \revision{network of networks} and its reduced system with random initial conditions.} In the beginning of the simulation, each \revision{area} is phase synchronized but on different phases \textbf{(a)}, therefore the \revision{multi-level} system is nonsynchronized. \textbf{(b)} As time evolves, due to the inter-\revision{area} coupling, the internal dynamics changes and the \revision{multi-level} system transition to phase synchronization. \textbf{(c)} Here, the dynamics of the \revision{multi-level} system and the reduced one are the same, which is expressed by the Kuramoto order parameter $R(t)$. \textbf{(d)} The trajectories for both the reduced system and the \revision{multi-level show that}: (i) these systems have equivalent dynamics; and (ii) the network transitions to phase synchrony as time evolves. \revision{\textbf{(e)} Our approach can be used even when the oscillators within a sub-network do not have the same phase. We use the broadcast procedure and then apply a perturbation over the phases of the oscillators with amplitude $\eta$. The reduced system and the network of networks have equivalent dynamics even in the presence of this perturbation; however, if the perturbation is too big, our approach no longer can be applied.}}
    \label{fig:3_layers_random_ic}
\end{figure*}
The first example is depicted in Fig. \ref{fig:3_layers_random_ic}. Here, the initial state is given by randomly selected phases for each \revision{area}, which are represented in the unitary circle in color-code (Fig. \ref{fig:3_layers_random_ic}a). All oscillators in each \revision{area} have the same phase, therefore each \revision{sub-network} is phase synchronized, but the entire, \revision{multi-level} system is not -- in the beginning of the simulation. As time evolves, the \revision{multi-level} system transitions to phase synchrony due to the inter-\revision{area} coupling, which leads all oscillators in all \revision{areas} to depict the same phase in the final state (Fig. \ref{fig:3_layers_random_ic}b).

The change on the dynamics over time is represented by the Kuramoto order parameter (Fig. \ref{fig:3_layers_random_ic}c), which is evaluated through Eq. (\ref{eq:order_parameter_multilayer}) for the \revision{network of networks} and through Eq. (\ref{eq:order_parameter_reduced}) for the reduced system. Here, one can observe that both systems have equivalent dynamics: at $t = 0$, $R$ has a low value due to the random phases on the initial state; due to the coupling, $R(t)$ increases as time evolves and, at $t \approx 2$, the order parameter reaches the unity once both systems reach phase synchronization.

In order to show details on the dynamical equivalence between the reduced system and the \revision{multi-level} network, we plot the trajectories given by the phases of the oscillators in Fig. \ref{fig:3_layers_random_ic}d. Here, the phases in the reduced system are represented by the solid-shaded lines, and the phases of the oscillators in the \revision{multi-level} network are given by the dashed lines. This result emphasizes that: (i) these systems have equivalent dynamics, and also (ii) the network transitions to phase synchrony as time evolves since the oscillators converge to the same phase.

\revision{Furthermore, we can consider the case where the oscillators within each sub-network (area) do not have the same phase at the beginning of the analysis. To do so, we use the same broadcast procedure in addition to a random perturbation with amplitude $\eta$ (Fig. \ref{fig:3_layers_random_ic}e). In this case, the phase of each oscillator within a sub-network is randomly chosen around the phase of this area in the reduced system. Figure \ref{fig:3_layers_random_ic}e shows that the reduced system is able to capture the dynamics of the network of networks even in the presence of perturbation over the initial state. However, if the perturbation is too big, the reduced system and the multi-level network no longer have equivalent dynamics.}

 \subsection{Unstable twisted states}

We use the same system to analyze a different kind of dynamical behavior: phase-locking states, where a constant phase difference is observed across oscillators \cite{townsend2020dense}. This kind of state is also known as ``twisted states", and, for a network with $M$ units, the $p^{\mathrm{th}}$ twisted state is given by:
\begin{equation}
\bm{\theta}^{(p)}=\left(0,  \frac{-2 \pi p}{M}, \cdots, \frac{-2 \pi p (M-1)}{M} \right).  
\label{eq:twisted_states}
\end{equation}
We use this equation to obtain the twisted states for the reduced system and then use the broadcasting approach to extend this \revision{to the network of networks}.

We consider here the same network as in Fig. \ref{fig:3_layers_random_ic}: $3$ \revision{sub-networks}, where each \revision{one} is composed of $N = 100$ nodes, and the internal connection structure of each \revision{area} is given \revision{by a random network. Here, the internal coupling is given by $\epsilon_{\mathrm{intra}} = 1.0$ and the coupling between sub-networks is given by $\epsilon_{\mathrm{inter}} = 0.01$}. The reduced system is then given by a $3$ nodes network, which is coupled in an all-to-all scheme. A possible phase-locking state for this network, following Eq. (\ref{eq:twisted_states}) is represented in Fig. \ref{fig:3_layers_twisted_state}a, where constant difference of $\sfrac{2\pi}{3}$ is observed. As stated before, all oscillators within a layer have the same phase, so each \revision{area} is phase synchronized, but the entire system, in this particular case, has $R = 0$, given the phase-locking solution.
\begin{figure*}[htb]
    \centering
    \includegraphics[width=0.9\textwidth]{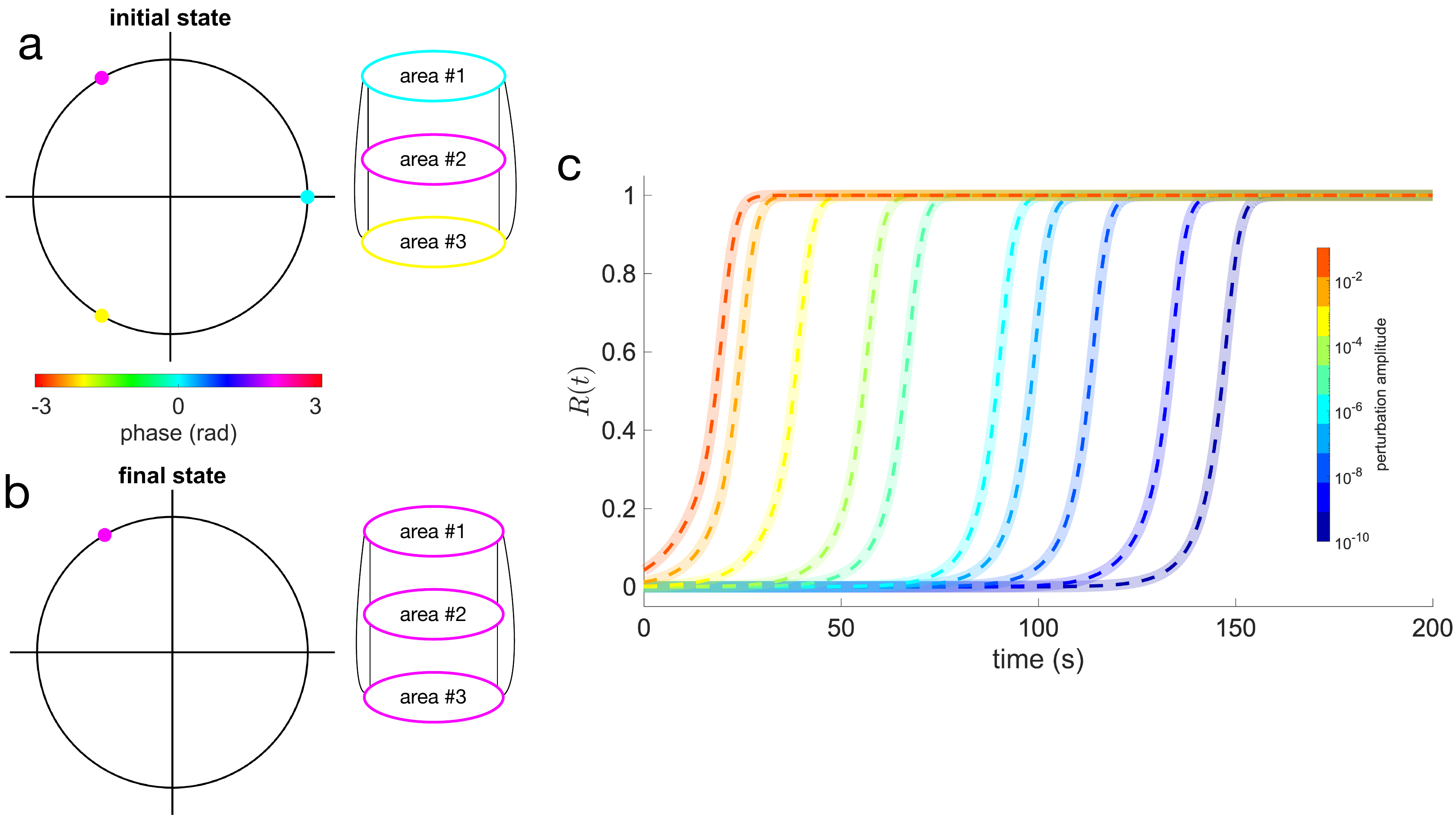}
    \caption{\textbf{Dynamics on a \revision{network of networks} and its reduced system on a phase-locking state.} \textbf{(a)} We consider a twisted state for the reduced system following Eq. (\ref{eq:twisted_states}) with $M = 3$ and $p = 1$. This leads to an initial state where a constant phase difference is observed across nodes. \textbf{(b)} However, this twisted state is not stable, so as time evolves, a transition to phase synchrony is observed, and all oscillators have the same phase in the final state. \textbf{(c)} The time \revision{at which} this transition occurs, however, depends on the perturbation applied to the system. We considered different perturbations amplitudes (color-code) and analyze the Kuramoto order parameter as a function of time for the reduced system (solid-shaded lines) and for the \revision{multi-level} network (dashed lines). The bigger the perturbation the faster the transition, and our approach shows a perfect match between the dynamics of the reduced system and the \revision{multi-level} network.}
    \label{fig:3_layers_twisted_state}
\end{figure*}

This state, however, is not stable, so small perturbation, e.g. due to the coupling or perturbation on the initial state, can lead the system to transition to phase synchronization (Fig. \ref{fig:3_layers_twisted_state}b). Our approach is able to capture the details of this transition. To show this point, we perform a detailed simulation protocol, where different perturbations are applied to the initial state. This perturbation is given by a uniform, random distribution of phases, multiplied by an amplitude factor. Mathematically, the perturbation can be described by: $\mathcal{A} \, \mathcal{U}(-\pi, \pi)$, where $\mathcal{A}$ is the perturbation amplitude. Figure \ref{fig:3_layers_twisted_state}c depicts the Kuramoto order parameter as a function of time when different amplitudes are considered, which are represented in color-code. Here, the order parameter for the reduced system is shown in the solid-shaded lines, and the order parameter for the \revision{network of networks} is shown in the dashed lines. We observe that the smaller the perturbation, the longer it takes for the phase-locking state to transition to phase synchrony. We emphasize that our reduced approach has equivalent dynamics to the \revision{multi-level} system, so we observe a perfect match.

\subsection{Higher number of \revision{sub-networks}}

The approach introduced here through the reduced system and the broadcasting process is general to study \revision{network of networks} with an arbitrary number of \revision{sub-networks} and internal connection scheme. To show this point, we then analyze a \revision{multi-level} system composed of $M = 50$ \revision{sub-networks}, where each \revision{one} is given by a random network with $N = 100$ nodes. Here, all \revision{sub-networks} are connected, such that the reduced system is given by a complete graph with $M = 50$ nodes\revision{, and the internal coupling strength is given by $\epsilon_{\mathrm{intra}} = 1.0$ and inter-areas coupling varies in different simulations $\epsilon_{\mathrm{inter}} \in [0.001, 0.005]$.}

At first, we consider the case of random initial conditions for \revision{the reduced system and use the broadcast mechanism to obtain the initial conditions for the multi-level network}. Figure \ref{fig:50_layers}a show the trajectories for the reduced (solid-shaded lines) and the \revision{multi-level} systems (dashed lines) based on the phases of the oscillators ($\bar{\theta}$ and $\theta$). Due to the random initial conditions (see the inner panel for the initial state), the systems are desynchronized in the beginning of the simulation. However, as time evolves, phase synchronization is reached and all oscillators have the same phase (see inner panel for final state). This result also shows the equivalence in the dynamical behavior of both systems.
\begin{figure*}[htb]
    \centering
    \includegraphics[width=0.95\textwidth]{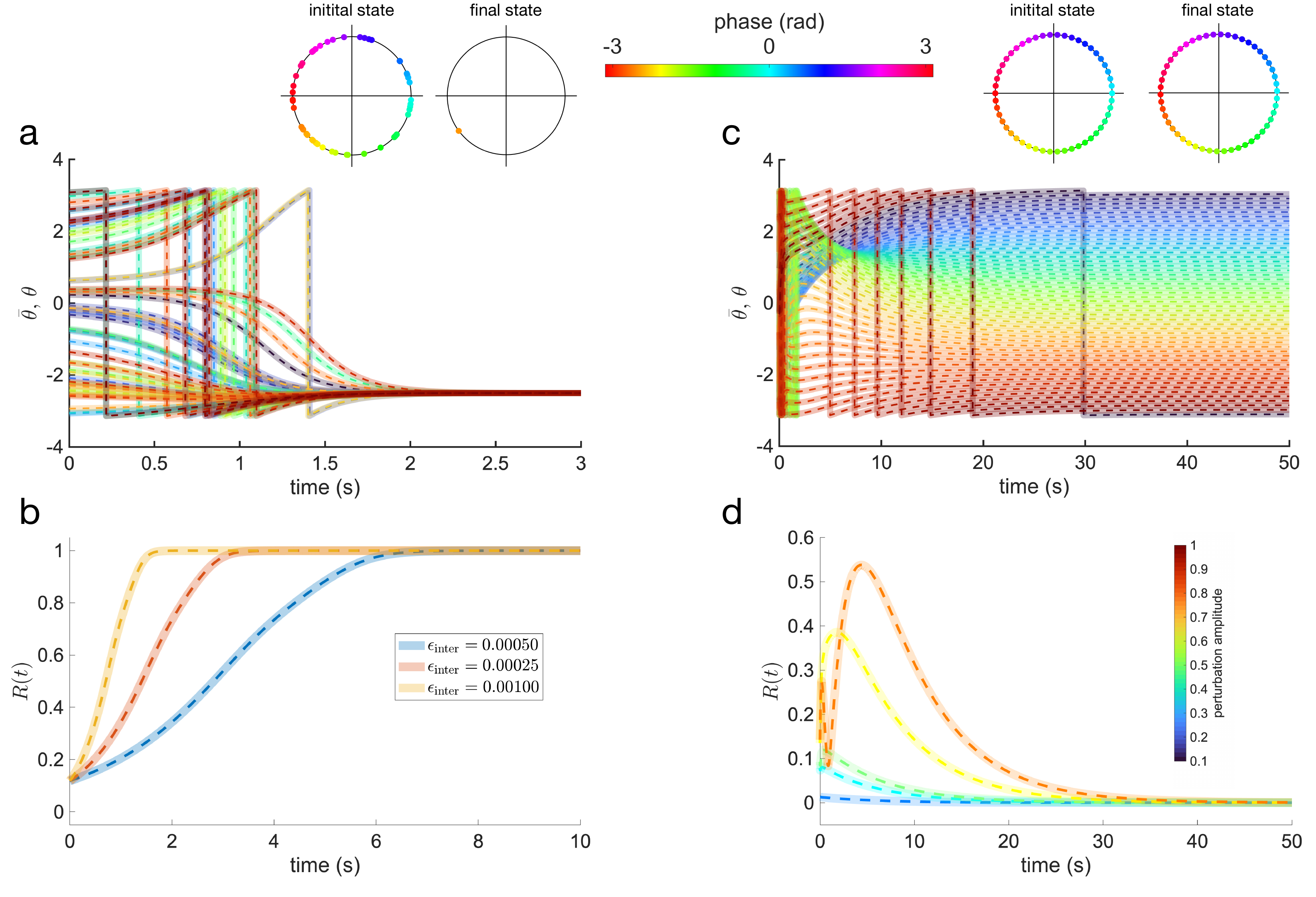}
    \caption{\textbf{Dynamics \revision{on a network with $50$ sub-networks} and its reduced system}. We first consider the case where the inter-\revision{area} coupling scheme is given by a complete graph (all-to-all). So, random initial conditions evolve to phase synchrony. Here, solid-shaded lines represent the reduced system, and dashed lines represent the \revision{network of networks}. The trajectories of the systems emphasize this point \textbf{(a)} and also show the equivalent dynamics between the systems. When we consider higher coupling strengths, the systems transition to phase synchrony quicker \textbf{(b)}. We then consider the case where the inter-\revision{area} connection scheme is given by a first-neighborhood architecture, where each \revision{area} is connected to two \revision{areas} (one in each direction) with periodic boundary conditions. In this case, phase-locking solutions are stable. We then show that the systems assume the form of a twisted state \textbf{(c)}; and that this is robust to different perturbations \textbf{(d)}.}
    \label{fig:50_layers}
\end{figure*}

Moreover, if we consider different coupling strengths between the \revision{sub-networks} (inter coupling), the transition to phase synchrony occurs at different times, as described in Prop. (\ref{prop:coupling_scale}). To show that our approach is able to capture these details, we consider the same random initial conditions and we change the inter coupling strength. Figure \ref{fig:50_layers}b shows the order parameter for the reduced system (solid-shaded lines) and for the equivalent \revision{network of networks} (dashed lines) as a function of time for different values of $\epsilon_{\mathrm{inter}}$. We observe that the higher the coupling the faster the transition to phase synchronization. \revision{Further, this follows from Prop. \ref{prop:coupling_scale}, and, in addition to that, we are able to quantify to what extent the increase on the coupling $\epsilon_{\mathrm{inter}}$ reduces the time to reach phase synchronization, given by $R(t) = 1$.}

We also use the approach introduced in this paper to analyze the broadcasting of stable phase-locking states. To do so, we consider the \revision{multi-level} system with $M = 50$ \revision{sub-networks}, each one being described by a random network with $N = 100$ nodes. Each \revision{sub-network}, however, is connected to two \revision{sub-networks} only, in a first-neighborhood fashion. This leads to a reduced system being described as a ring graph with $N = 50$ nodes, where each node has degree $2$ with periodic boundary conditions. This kind of network is known to support stable twisted states \cite{townsend2020dense}. Particularly, we use Eq. (\ref{eq:twisted_states}) with $M = 50$ and $p = 1$ to generate the stable phase-locking solution \cite{townsend2020dense} for the reduced system, which is broadcasted to the \revision{network of networks} following the approach described in this paper. Moreover, to show the stability of this solution and that our approach is able to capture these details, we apply perturbation to this phase-locking state using the same approach as before introduced: $\mathcal{A} \, \mathcal{U}(-\pi, \pi)$.

The trajectories for both, the reduced system and the \revision{network of networks} are shown in Fig. \ref{fig:50_layers}c. In this case, the initial state is given by a twisted state as shown in the inner panel. We then apply a perturbation to this state, which leads to a change in the phase configuration. However, given the stability of the $1^{\mathrm{st}}$ ($p=1$) twisted state for this system, despite the perturbation, the phase-locking is recovered and we can observe the constant phase difference across the oscillators, which leads to the final state being exactly the same as the initial one without perturbation (Fig. \ref{fig:50_layers}c inner panels). Again, the phases for the reduced system are represented by the solid-shaded lines, and the phases for the \revision{multi-level} networks are represented by the dashed lines. 

We also consider different perturbation amplitudes applied to the initial state. Figure \ref{fig:50_layers}d shows the Kuramoto order parameter for both systems (solid-shaded lines represent the reduced system and dashed lines the multilayer network) as a function of time. The perturbation amplitude is shown in color-code. We observe that the stronger the amplitude the bigger the variation of the order parameter, but given the stability of this state, in the end, $R = 0$ for all cases, which characterizes the solution for these systems.

\section{Discussions and conclusions}\label{sec:conclusions}

In this paper, we have introduced an alternative approach to study and predict the dynamical behavior of Kuramoto oscillators on a class of \revision{networks of networks}. Based on the results presented in \cite{doan2022joins, doan2022sprectrum}, we can represent \revision{these networks} as a join of matrices, each one representing a different connection between nodes either in the same \revision{sub-network} or in different \revision{sub-networks}. From this representation, we have developed a ``reduced system" that holds the important information regarding the original, \revision{multi-level} network. While the \revision{network of networks} is composed of $M$ \revision{sub-networks} with $N$ nodes, the reduced system is given by $M$ elements. We have shown that, when the initial state of the \revision{multi-level} system has a relation with the initial state of the reduced one, both systems have equivalent dynamics. Thus, it allows us to investigate the dynamical behavior of the \revision{network of networks} in a simpler way.

\revision{This problem has sparked a lot of interest due to its wide range of applications. The idea of finding a reduced, simpler version of a sophisticated system has been explored focusing on different aspects of multi-level networks and dynamics \cite{kivela2014multilayer,boccaletti2014structure,schaub2016graph,menara2019stability,tiberi2017synchronization, gao2011robustness,gao2012networks,kenett2015networks}. In our paper, we have explored this problem in order to contribute to this discussion. Our results have shown that} we can find solutions for the reduced system and broadcast them to the \revision{network of networks}. This method offers an alternative and simple way to find equilibrium points for Kuramoto oscillators on \revision{these networks}. This approach is general to arbitrary topologies for intra-\revision{areas} connections (within each \revision{sub-network}). \revision{Importantly, we have extended the discussion on multi-level networks and dynamics regarding the stability of a given solution. In our paper, we have shown that} we can use \revision{the approach we have introduced} to obtain information on the linear stability of equilibrium points \revision{in multi-level networks}. We can write the Jacobian matrix for both systems\revision{, the multi-level and the reduced one,} and, by using the results from \cite{doan2022joins}, we can obtain the spectrum of the Jacobian of the \revision{multi-level} network based on the spectrum of the Jacobian of the reduced system. Therefore, we are now able to investigate the linear stability of equilibrium points of \revision{network of networks} in a simple way.

\revision{At the same time that our approach contributes to the discussion of the dynamics in multi-level systems, however, there are several open problems yet to be solved. The approach we have shown in this paper was studied in a simple version of a multi-level system. In this way, the extension of this approach to multilayer networks with different coupling functions and different individual dynamics, and the consideration of different dynamical states than phase synchronization within each sub-network are important features that would contribute to the study of the dynamical behavior of multi-level systems.}

\revision{This is an important problem in modern science, since these} networks have been used to model a variety of systems. Extensive numerical studies have shown a rich repertory of dynamical behavior \cite{kasatkin2017self,kachhvah2021explosive, majhi2017chimera}. Moreover, experimental analyses confirm this feature \cite{blaha2019cluster, leyva2017inter}. Furthermore, the use of \revision{multi-level} networks has been helpful in the understanding of complex systems, e.g. neural systems \cite{bassett2018nature,palmigiano2017flexible}. However, analytical treatment for this kind of system is still an open problem in these diverse fields. Our approach thus offers a novel path in the study of \revision{multi-level} network, opening the possibility of analytical and mechanistic insights into the dynamics of these important systems.

\revision{\section*{Code availability}

An open-source code repository for this work is available on GitHub: \href{http://mullerlab.github.io}{\textcolor{Cerulean}{mullerlab.github.io}}.}

\begin{acknowledgments}
This work was supported by BrainsCAN at Western University through the Canada First Research Excellence Fund (CFREF), the NSF through a NeuroNex award (\#2015276), the Natural Sciences and Engineering Research Council of Canada (NSERC) grant R0370A01, ONR N00014-16-1-2829, SPIRITS 2020 of Kyoto University, Compute Ontario (computeontario.ca), Compute Canada (computecanada.ca), and by the Western Academy for Advanced Research. J.M.~gratefully acknowledges the Western University Faculty of Science Distinguished Professorship in 2020-2021. R.C.B gratefully acknowledges the Western Institute for Neuroscience Clinical Research Postdoctoral Fellowship. R.D. was supported by the Swiss National Science Foundation, under grant number P400P2\_194359. \revision{We thank the anonymous Referees for their help in improving the quality and clarity of this paper.}
\end{acknowledgments}

%

\end{document}